\documentclass[12pt, a4paper, reqno]{amsart}

\usepackage{amsthm, mathtools, amssymb, amsfonts, stmaryrd, latexsym, mathrsfs, todonotes, eufrak, color}
\usepackage{graphicx}
\usepackage[english]{babel}
\usepackage[all]{xy}
\usepackage{nicefrac}
\usepackage{mparhack}
\usepackage[normalem]{ulem}
\usepackage{color}
\usepackage{blindtext}
\usepackage[inline]{enumitem}
\usepackage{fancyhdr}
\usepackage{enumitem}
\usepackage{algorithmic}
\usepackage{algorithm}
\usepackage{hyperref}

\theoremstyle{plain}
\newtheorem{theo}{Theorem}[section]
\newtheorem{lemma}[theo]{Lemma}
\newtheorem{prop}[theo]{Proposition}
\newtheorem{corollary}[theo]{Corollary}
\newtheorem{remark}[theo]{Remark}
\newtheorem{example}[]{Example}

\theoremstyle{definition}
\newtheorem{definition}[]{Definition}

\newcommand{\F}{\mathbb{F}}

\newcommand{\Z}{\mathbb{Z}}
\newcommand{\Q}{\mathbb{Q}}
\newcommand{\R}{\mathbb{R}}

\newcommand{\N}{\mathbb{N}}

\renewcommand{\epsilon}{\varepsilon}

\newcommand{\old}[1]{\sout{#1}}
\newcommand{\new}[1]{{\color{red}{#1}}}

\newcounter{enumi_saved}

\makeatletter
\def\imod#1{\allowbreak\mkern10mu({\operator@font mod}\,\,#1)}
\makeatother

\allowdisplaybreaks

  \title{Finite groups of units of\\finite characteristic rings}
\begin{document}
\keywords{
Commutative algebra;
Group theory;
Fuchs' Problem;
Units groups;
Finite characteristic rings.
}

\maketitle
\begin{center}\author{\textsc{Ilaria Del Corso and Roberto Dvornicich}}
\end{center}

\begin{abstract}
  In \cite[Problem 72]{Fuchs60} Fuchs asked the following question:
  which groups can be the group of units of a commutative ring?  In the
  following years, some partial answers have been given to this
  question in particular cases.  The aim of the present paper is to
  address Fuchs' question when $A$ is a {\it finite characteristic
    ring}. The result is a pretty good description of the groups which can occur as group of units in this case, equipped
  with examples showing that there are obstacles to a ``short''
  complete classification. 

  As a byproduct, we are able to classify all possible cardinalities of the group of units  of a finite characteristic ring, so to answer Ditor's question \cite{ditor}.  
\end{abstract}

\section{Introduction}

In \cite[Problem 72]{Fuchs60} Fuchs 
posed the following problem:

\begin{itemize}
\item[]{\sl Characterize the groups which are the groups of all units in a commutative and associative ring with identity.}
\end{itemize}

In the following years, this question and related ones have been considered by many authors.
A first result is due to  Gilmer \cite{Gilmer63}, who classified the {\it finite commutative rings} $A$ with the property that the group of units $A^*$ is cyclic,  classifying as a byproduct also the possible cyclic groups that arise in this case. 

Further steps in the study of Fuchs' Problem came from
the results by  Hallet and Hirsch \cite{HalletHirsch65}, and subsequently by Hirsch and Zassenhauss
\cite{HirschZassenhaus66}, combined with  \cite{Corner63}. 
From their results one can deduce necessary conditions for a  finite group to be the group of units of a reduced and torsion free ring.

Later on, Pearson and Schneider \cite{PearsonSchneider70} combined the result of Gilmer and the result 
of Hallet and Hirsch to describe explicitly all possible {\it finite cyclic groups} that can occur as $A^*$ for a commutative ring $A$. 

In 1971 Ditor \cite{ditor}, in view of the difficulty of dealing with the general Fuchs Problem, posed the following less general question: 
\begin{itemize}
\item[]{\sl Which whole numbers can be the number of units of a ring? }
\end{itemize}

In his paper he answered this question in the easier case of odd integers.

Other instances on this subject were considered also by other authors,
like Eldridge and Fischer \cite{EldridgeFischer67}, who considered
rings with the descending chain condition, or Dol${\rm \check{z}}$an
\cite{Dolzan02}, who classified the finite rings $A$ for which $A^*$
is a finite group whose order is either a prime power or not divisible
by 4. Recently, Chebolu and Lockridge \cite{CheboluLockridge15} have
studied Fuchs' problem in a different setting, and have been able to
classify the {\it indecomposable abelian groups} that are realizable
as the group of units of a ring.
\vskip0.2cm

The aim of the present paper is to  address Fuchs' and Ditor's
questions when $A$ is a {\it finite characteristic ring}. 
We call an abelian  group {\it realizable} if it is the group of units of some finite characteristic ring.

We first show that the general problem can be reduced to the study of the group of units of finite local rings.  In Theorem \ref{struttura} and Corollary \ref{carfinita} we give necessary conditions for an abelian group to be realizable. 
The conditions found in Theorem \ref{struttura} allow to produce infinite families of non-realizable groups (see Proposition \ref{nonreal}). As a direct consequence, we reobtain the classification of cyclic units groups already found by Gilmer in \cite{Gilmer63} (see Corollary \ref{ciclici}). 

On the other hand,  in the paper we also  give positive answers; in fact,  we  exhibit  large classes of groups that are realizable.
For example, in Propositions \ref{p>2} we show that, for $p>2$, all groups of the form $\F_{p^\lambda}^*\times P^\lambda$ are realizable, for all $\lambda\ge1$ and for all abelian $p$-group $P$.  
Proposition \ref{p=2} contains a similar result for $p=2$.

Moreover, in Section \ref{secex}, we give further examples which are not included in  Propositions \ref{p>2}  and \ref{p=2}.
Both the propositions  and the construction of the examples need the case of rings with characteristic a power of 2 to be treated separately. This is mainly due to the particular behaviour of $(\Z/p^n\Z)^*$ for $p=2$.

Our results allow to exhibit 
infinite families of "new" examples of realizable groups (see Remark \ref{new}) and infinite families of non realizable groups (see Proposition \ref{nonreal}), reducing considerably the space between necessary and sufficient conditions
  for a group to be realizable. In view of the examples in Section \ref{secex}, we believe that  a more precise statement of
  necessary and sufficient conditions would be very complicated.
  
Moreover,  we are able to completely answer Ditor's question for finite characteristic rings
(see Corollary \ref{numericarfinita}) and to construct,  for each possible cardinality $n$, a  finite characteristic ring $A$ with $|A^*|=n$.

As a final remark, we point out that our results include all previously known facts for the case under consideration; some results already proved in \cite{Gilmer63} and in  \cite{Dolzan02} have been derived as particular cases in our paper.

In a forthcoming paper \cite{DDchar0} we investigate Fuchs' and Ditor's Problems in the case of characteristic 0 rings and we obtain a complete classification of the groups of units of reduced rings.

\section{Notation and preliminary results}

Let $A$ be ring with 1 of finite characteristic. As usually,
$A^*$ will denote its multiplicative group, which we shall assume
{\it finite and abelian} throughout the paper.

We recall that a ring is called reduced if its nilradical is trivial, namely, if it does not contain non-zero nilpotents.

\begin{remark}
  \label{ZA} {\rm Let $n=char(A)$, then $\Z/n\Z$ is the fundamental subring of $A$. Let
    $B=\Z/n\Z[A^*]$; then $B\subseteq A$,  and $B^*=A^*$.  
    
     We also note that, the rings $A$ and $B$ have the same nilradical; in fact, the nilradical of $B$ is obviously contained in the nilradical of $A$; on the other hand, if $a\in A$ is such that $a^n=0$ for some $n\in\N$, then $1+a\in A^*\subseteq B^*$, so $a\in B$. In particular, $A$ is reduced if and only if $\Z/n\Z[A^*]$ is reduced. 

If $A$ is a domain, then its characteristic is a prime number $p$ and $\Z/p\Z[A^*]$ is a finite field.
 }   
 \end{remark}
In this paper we are interested in determining the possible abelian groups that occur as groups of units of a finite characteristic ring: as a consequence of the previous remark, we
 may restrict our study to  {\it commutative} and {\it finite rings}.

In the following we shall make repeated use of the following proposition:
\begin{prop}
\label{successioneesatta}
{\sl
Let $A$ be a commutative ring, let $\mathfrak{N}$ denote its nilradical and let $\mathfrak{I}$ be an ideal of $A$ contained in $\mathfrak{N}$. Then the sequence
\begin{equation}
\label{success}
1\to 1+\mathfrak{I}\hookrightarrow A^*\stackrel{\phi}{\to}\left(A/\mathfrak{I}\right)^*\to 1,
\end{equation}
where $\phi(x)=x+\mathfrak{I}$, is exact.
}
\end{prop}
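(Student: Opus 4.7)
The plan is to verify the three usual exactness conditions for the sequence, leveraging the fact that elements of $\mathfrak{I}\subseteq\mathfrak{N}$ are nilpotent.

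First I would check that $1+\mathfrak{I}$ is a subgroup of $A^*$ so that the leftmost injection makes sense. If $a\in\mathfrak{I}$ then $a$ is nilpotent, say $a^n=0$, so $1+a$ has the explicit inverse $\sum_{k=0}^{n-1}(-a)^k$, which also lies in $1+\mathfrak{I}$ (since $\mathfrak{I}$ is an ideal). Closure under multiplication is immediate from $(1+a)(1+b)=1+(a+b+ab)$. Thus $1+\mathfrak{I}\leq A^*$ and the inclusion is a well-defined injective homomorphism.

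Next I would verify that $\phi$ is a well-defined group homomorphism and identify its kernel. The projection $A\to A/\mathfrak{I}$ sends units to units, so its restriction $\phi$ to $A^*$ lands in $(A/\mathfrak{I})^*$ and is clearly multiplicative. An element $x\in A^*$ satisfies $\phi(x)=1$ precisely when $x-1\in\mathfrak{I}$, i.e.\ $x\in 1+\mathfrak{I}$, giving $\ker\phi = 1+\mathfrak{I}$ and hence exactness in the middle.

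The one step that requires a genuine argument is surjectivity of $\phi$; this is the main (minor) obstacle. Given $\bar{y}\in(A/\mathfrak{I})^*$, pick any lift $y\in A$ and a lift $z\in A$ of $\bar{y}^{-1}$. Then $yz\equiv 1\pmod{\mathfrak{I}}$, so we may write $yz=1+i$ for some $i\in\mathfrak{I}\subseteq\mathfrak{N}$. As noted above, $1+i$ is a unit of $A$, so $y\cdot\bigl(z(1+i)^{-1}\bigr)=1$; by the same reasoning applied to $zy=1+i'$ we obtain a left inverse. Hence $y\in A^*$ and $\phi(y)=\bar{y}$, proving surjectivity. Combining the three verifications yields the exactness of \eqref{success}.
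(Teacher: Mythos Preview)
Your proof is correct and follows essentially the same approach as the paper: both verify that $\ker\phi=1+\mathfrak{I}$ and prove surjectivity by lifting $\bar y$ and its inverse to $y,z\in A$ and observing that $yz\in 1+\mathfrak{I}\subseteq A^*$ forces $y\in A^*$. Your argument is slightly more explicit in checking that $1+\mathfrak{I}$ is a subgroup, and the separate appeal to $zy=1+i'$ for a ``left inverse'' is unnecessary since $A$ is commutative, but these are cosmetic points.
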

\begin{proof} Clearly  
$$ 1\to 1+\mathfrak{I}\to A^*\to{A^*}/{(1+\mathfrak{I})}\to 1$$
is exact. Moreover, the
 map $\Phi : A^*\to (A/\mathfrak{I})^*$  defined by $x\mapsto x+\mathfrak{I}$ is a homomorphism with kernel equal to $1+\mathfrak{I}$. Further, if $x+\mathfrak{I}\in (A/\mathfrak{I})^*$ and $(x+\mathfrak{I})( y+\mathfrak{I})=1+\mathfrak{I}$ then $xy\in1+\mathfrak{I}\subseteq A^*$, so $x\in A^*$ and $\Phi$ is surjective, so $A^*/(1+\mathfrak{I})\cong( A/\mathfrak{I})^*.$
\end{proof}

The case when $\mathfrak{I}=\mathfrak{N}$ is particularly interesting since, as we shall see in the following,  for finite characteristic rings the exact sequence  \eqref{success} always splits  in this case (see Theorem \ref{struttura}). However,  there are cases when the exact sequence does not split.

\section{General results}
\label{finite}
As noted in the previous section, we  we may suppose that $A$ is finite and commutative, so we will assume it in the following. Under this assumption, we have that $A$ is artinian, then by the structure theorem of artinian rings, $A$ is
isomorphic to a direct product of artinian local rings, 
$$A \cong
A_1\times \dots \times A_s,$$ 
and hence 
$$A^* \cong A_1^*\times \dots
\times A_s^*.$$
This reduces the problem to the study of the cardinality and the structure of the  group of units of  a finite  local ring. We recall that the characteristic of such a ring is a prime power.
The following theorem describes the groups of units in this case.\begin{theo}
\label{struttura}
{\sl 
Let $p$ be a prime, and let $(A, {\mathfrak m})$ be a finite local  ring of characteristic a power of $p$, such that $A^*$ is finite.
Then 
\begin{equation}
\label{formula}
A^*\cong\F_{p^\lambda}^*\times H
\end{equation}
where $A/\mathfrak{m}\cong \F_{p^\lambda}$ and $H=1+\mathfrak{m}$ is an abelian $p$-group.
Moreover, the filtration $\{1+\mathfrak{m}^i\}_{i\ge1}$ has the property that all its quotients   are $\F_{p^\lambda}$-vector spaces and $|H|=p^{\lambda k}$ for some $k\ge0$.

Conversely, for each prime $p$, for each $\lambda\ge1$ and for each  $k\ge0$ there exists a  a  local artinian ring with characteristic a power of $p$ such that $A^*\cong\F_{p^\lambda}^*\times H$ with $|H|=p^{\lambda k}$.

Finally, when restricting to reduced local artinian rings, the groups of units that can be obtained are precisely the multiplicative groups of finite fields.}
\end{theo}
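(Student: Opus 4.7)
The plan is to combine Proposition~\ref{successioneesatta} with a filtration analysis of $1+\mathfrak{m}$ to obtain the product decomposition, and then realize every prescribed pair $(\lambda,k)$ via an explicit truncated polynomial ring.

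First I would observe that since $A$ is finite and local, $\mathfrak{m}$ is nilpotent and coincides with the nilradical of $A$; moreover $p$ is nilpotent in $A$, so $p\in\mathfrak{m}$ and the residue field $A/\mathfrak{m}$ has characteristic $p$. Being a finite field, $A/\mathfrak{m}\cong \F_{p^\lambda}$ for some $\lambda\ge1$. Applying Proposition~\ref{successioneesatta} with $\mathfrak{I}=\mathfrak{m}$ yields the exact sequence
\[
1\to 1+\mathfrak{m}\to A^*\to \F_{p^\lambda}^*\to 1,
\]
reducing the task to analyzing $H:=1+\mathfrak{m}$ and splitting this sequence.

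The central step is to filter $H$ by the subgroups $1+\mathfrak{m}^i$, which is a finite chain since $\mathfrak{m}$ is nilpotent. For each $i\ge1$ the map $1+x\mapsto x+\mathfrak{m}^{i+1}$ is a well-defined group isomorphism $(1+\mathfrak{m}^i)/(1+\mathfrak{m}^{i+1})\to \mathfrak{m}^i/\mathfrak{m}^{i+1}$: additivity follows from $(1+x)(1+y)=1+(x+y)+xy$ together with $xy\in\mathfrak{m}^{2i}\subseteq\mathfrak{m}^{i+1}$. Since $\mathfrak{m}^i/\mathfrak{m}^{i+1}$ is naturally an $A/\mathfrak{m}=\F_{p^\lambda}$-vector space, so is each multiplicative quotient; taking the product of their cardinalities gives $|H|=p^{\lambda k}$ for some $k\ge0$, and in particular $H$ is an abelian $p$-group. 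Since $|H|$ and $|\F_{p^\lambda}^*|=p^\lambda-1$ are coprime, the primary decomposition of the finite abelian group $A^*$ forces $A^*\cong H\times \F_{p^\lambda}^*$, giving \eqref{formula}.

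For the converse, for any $\lambda\ge1$ and $k\ge0$ I would take $A=\F_{p^\lambda}[x]/(x^{k+1})$: this is a finite local artinian ring of characteristic $p$ with residue field $\F_{p^\lambda}$ and maximal ideal $(x)$, and its principal unit group $1+(x)$ has order $(p^\lambda)^k$, realizing every prescribed $(\lambda,k)$. For the last assertion, if $A$ is reduced and local artinian then the nilradical $\mathfrak{m}$ vanishes, so $A=A/\mathfrak{m}\cong\F_{p^\lambda}$ is itself a finite field; conversely every finite field is such a ring, so exactly the multiplicative groups of finite fields arise. The step I expect to be most delicate is tracking the $\F_{p^\lambda}$-module (not merely $\F_p$-module) structure on the filtration quotients: this refinement is what delivers the sharper cardinality $|H|=p^{\lambda k}$, and it is precisely the coprimality of $p^{\lambda k}$ with $p^\lambda-1$ that then makes the exact sequence split.
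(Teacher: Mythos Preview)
Your proof is correct and follows essentially the same route as the paper's: the filtration argument and the isomorphism $(1+\mathfrak{m}^i)/(1+\mathfrak{m}^{i+1})\cong \mathfrak{m}^i/\mathfrak{m}^{i+1}$ are handled identically. There are two mild variations worth noting. First, to split the exact sequence the paper invokes the Schur--Zassenhaus theorem, whereas you use the primary decomposition of the finite abelian group $A^*$; for abelian groups these amount to the same thing, and your phrasing is the more elementary one. Second, for the converse the paper constructs $R=(\Z/p^{k+1}\Z)[t]/(f(t))$ with $f$ monic of degree $\lambda$ and irreducible reduction mod $p$, while you take $\F_{p^\lambda}[x]/(x^{k+1})$; both are valid, but the paper's ring (of characteristic $p^{k+1}$ rather than $p$) is the template it reuses in Propositions~\ref{p>2} and~\ref{p=2}, so their choice is motivated by what comes later.
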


\begin{proof}
In our hypothesis, we have that the nilradical $\mathfrak{N}$ coincides with  $\mathfrak{m}$, so the exact sequence in \eqref{success} can be written as
\begin{equation}\label{se}
1\rightarrow 1+\mathfrak{m}\to A^*\to (A/\mathfrak{m})^*\to 1.
\end{equation}
	
Consider the finite chain 
\begin{equation}
\label{filtration}
\mathfrak{m}\supset
  \mathfrak{m}^2\supset \dots \supset \mathfrak{m}^r=\{0\}.
  \end{equation}
  We have that, for each $i=1,\dots r-1$, the
  quotient $\mathfrak{m}^i/\mathfrak{m}^{i+1}$ is a $A/\mathfrak{m}$-vector space, and hence has
  cardinality $p^{\lambda k_i}$ for some positive integer $k_i$.
  Putting $k=\sum\limits_{i=1}^{r-1} k_i$, we obtain that 
  $|\mathfrak{m}|=p^{\lambda k}$ and hence $|1+\mathfrak{m}|=p^{\lambda k}$. 

We can now apply  Schur-Zassenhaus Theorem (see \cite[9.12]{Robinson95}) to the exact sequence in \eqref{se}: it follows that $A^*\cong\F_{p^\lambda}^*\times (1+\mathfrak{m})$ and equation \eqref{formula}  follows with $H=1+\mathfrak{m}$. 

Moreover, the isomorphism $\mathfrak{m}^i/\mathfrak{m}^{i+1}\rightarrow(1+\mathfrak{m}^i)/(1+\mathfrak{m}^{i+1})$ induced by the map $x\mapsto1+x$ ensures that the quotients of the filtrations $\{\mathfrak{m}^i\}_{i\ge1}$ and $\{1+\mathfrak{m}^i\}_{i\ge1}$  are isomorphic, hence  $(1+\mathfrak{m}^i)/(1+\mathfrak{m}^{i+1})$ is a $\F_{p^\lambda}$-vector space for each $i$. 	

\smallskip

Conversely, let $\lambda\ge 1$, $k\ge0$ and let $f(t)\in (\Z/p^{k+1}\Z)[t]$ be a monic polynomial of degree $\lambda$ such that its reduction modulo $p$, $\bar{f}(t)$, is irreducible. Define 
	$$R=\frac{(\Z/p^{k+1}\Z)[t]}{(f(t))};$$
clearly $R$ is a local artinian ring and its maximal ideal $\mathfrak{m}$ is generated by the class of $p$. 
As above, $R^*\cong\F_{p^\lambda}^*\times (1+\mathfrak{m})$ and it is easy to check that $|1+\mathfrak{m}|=|\mathfrak{m}|=p^{\lambda k}.$

We conclude by noticing that the statement about reduced rings is clear, since a reduced local artinian ring is a field.
\end{proof}

%
  We summarize the results for a general ring of finite characteristic  in the following corollary, which also retrieves some known results as particular cases.\begin{corollary}
  \label{carfinita} {\sl Let $A$ be a ring of finite characteristic.
  Then $A^*$
  is a finite the product of groups as in \eqref{formula}, where each prime  $p|{\rm char}(A)$,
    gives at least one factor.
    
    Moreover, the finite abelian groups that occur as group of units of reduced rings of finite characteristic  are exactly the finite products of multiplicative groups of finite fields.    
    
Finally, if $A$ is a domain, then $A^*$ is the multiplicative group of a finite field.
    }
    \end{corollary}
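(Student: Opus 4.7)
The plan is to combine Remark \ref{ZA}, the Artinian structure theorem, and Theorem \ref{struttura} into a short chain of reductions. First I would use Remark \ref{ZA} to replace $A$ by its subring $B=\Z/n\Z[A^*]$, which has the same group of units and the same nilradical; since $A^*$ is finite by assumption, this reduces us to the case where $A$ is a finite commutative ring. Being Artinian, such an $A$ decomposes by the structure theorem as $A\cong A_1\times\cdots\times A_s$ into finite local rings, whence $A^*\cong A_1^*\times\cdots\times A_s^*$.

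Each local factor $A_i$ has characteristic a power of a prime $p_i$, so Theorem \ref{struttura} applies and gives $A_i^*\cong\F_{p_i^{\lambda_i}}^*\times H_i$ of the form appearing in \eqref{formula}. To see that every prime $p\mid\mathrm{char}(A)$ contributes at least one such factor, I would invoke the identity $\mathrm{char}(A_1\times\cdots\times A_s)=\lcm_i\bigl(\mathrm{char}(A_i)\bigr)$: the primes dividing $\mathrm{char}(A)$ are exactly the $p_i$, and each such prime appears in the decomposition through the corresponding $A_i^*$.

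For the reduced case, the nilradical of a product is the product of the nilradicals, so each $A_i$ is reduced; a reduced local Artinian ring is a field, forcing $A_i\cong\F_{p_i^{\lambda_i}}$ and $A_i^*\cong\F_{p_i^{\lambda_i}}^*$, whence $A^*$ is a finite product of multiplicative groups of finite fields. Conversely, any such product $\prod_j\F_{q_j}^*$ is realized by the reduced finite ring $\prod_j\F_{q_j}$. The domain case is then immediate from Remark \ref{ZA}: there the characteristic is a prime $p$ and $\Z/p\Z[A^*]$ is already a finite field with units equal to $A^*$. The only mildly subtle step is the characteristic-of-product identity invoked in the second paragraph; everything else is a direct appeal to a result established earlier in the paper, so I expect no real obstacle beyond careful bookkeeping.
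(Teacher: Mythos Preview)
Your proposal is correct and follows essentially the same route as the paper: reduce via Remark~\ref{ZA} to a finite commutative ring, decompose it as a product of local Artinian rings, and apply Theorem~\ref{struttura} to each factor. You have in fact spelled out more details than the paper does (the $\lcm$ argument for the primes dividing the characteristic, the converse in the reduced case, and the domain case via Remark~\ref{ZA}), all of which are straightforward and handled correctly.
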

\begin{proof}
As observed at the beginning of this section, if $A$ is a ring of finite characteristic  with a finite abelian group of units, then $A^* \cong A_1^*\times \dots \times
  A_s^*$, where $A_i$ are local artinian finite rings. Then the information on $A^*$ can be derived by applying  Theorem \ref{struttura} to each $A_i$. \end{proof}

Theorem \ref{struttura} allows to completely answer the question raised by Ditor \cite{ditor} in the case of finite characteristic rings. In fact, we have the following 

\begin{corollary}
  \label{numericarfinita} {\sl The possible values for $|A^*|$ when
    ${\rm char}(A)$ is finite are all products of numbers of type
    $(p^\lambda -1 )p^{\lambda k}$, where $p|{\rm char}(A)$,
    $\lambda\ge 1$ and $k\ge 0$. In this product there exists at least a factor  of type $(p^\lambda -1 )p^{\lambda k}$ for each prime divisor of
    ${\rm char}(A)$. 
    }
    \end{corollary}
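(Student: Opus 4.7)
The plan is to deduce the corollary directly from Corollary \ref{carfinita} and the converse part of Theorem \ref{struttura}, treating the two implications separately.

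For necessity, I would start with a ring $A$ of finite characteristic with $A^*$ finite abelian. By the structure theorem invoked at the start of Section \ref{finite}, $A$ decomposes as a product of finite local rings $A_1\times\cdots\times A_s$, so $|A^*|=\prod_{i=1}^s|A_i^*|$. Each $A_i$ is local with residue field $\F_{p_i^{\lambda_i}}$ for some prime $p_i$, and by Theorem \ref{struttura} we have $A_i^*\cong\F_{p_i^{\lambda_i}}^*\times H_i$ with $|H_i|=p_i^{\lambda_i k_i}$ for some $k_i\ge0$. Hence $|A_i^*|=(p_i^{\lambda_i}-1)p_i^{\lambda_i k_i}$, giving the required product form. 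To see that every prime divisor of ${\rm char}(A)$ occurs among the $p_i$: since ${\rm char}(A)$ is the least common multiple of the ${\rm char}(A_i)$, and each ${\rm char}(A_i)$ is a power of $p_i$, every prime divisor of ${\rm char}(A)$ must equal some $p_i$; conversely each $p_i$ divides ${\rm char}(A)$.

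For sufficiency, given a finite collection of data $(p_i,\lambda_i,k_i)$ with $\lambda_i\ge1$, $k_i\ge0$, such that the multiset $\{p_i\}$ contains every prime divisor of a prescribed integer $n$ (the desired characteristic support), I would invoke the converse in Theorem \ref{struttura} to construct, for each index $i$, a finite local artinian ring $R_i$ of characteristic a power of $p_i$ with $R_i^*\cong\F_{p_i^{\lambda_i}}^*\times H_i$ and $|H_i|=p_i^{\lambda_i k_i}$. Setting $A=R_1\times\cdots\times R_s$, the ring $A$ has finite characteristic whose prime divisors are exactly $\{p_i\}$, and $|A^*|=\prod_i(p_i^{\lambda_i}-1)p_i^{\lambda_i k_i}$, as required.

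There is essentially no obstacle here: the corollary is a numerical translation of Corollary \ref{carfinita} together with the explicit construction in Theorem \ref{struttura}. The only point deserving care is the bookkeeping that links the primes appearing as factor-types in the product with the prime divisors of ${\rm char}(A)$, which is resolved by the observation that ${\rm char}(A_1\times\cdots\times A_s)={\rm lcm}({\rm char}(A_i))$ and each ${\rm char}(A_i)$ is a prime power.
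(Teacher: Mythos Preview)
Your proposal is correct and follows essentially the same approach as the paper: the paper does not give a separate proof for this corollary, treating it as an immediate numerical consequence of Corollary \ref{carfinita} and the converse part of Theorem \ref{struttura}, and your argument simply spells out that deduction (including the bookkeeping on ${\rm char}(A)=\lcm({\rm char}(A_i))$, which the paper leaves implicit).
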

    
We note that  the examples in the proof of Theorem \ref{struttura} allow the explicit construction of a ring $A$ with $|A^*|=n$, for each possible $n$.    
\smallskip 

For the sake of completeness we include the following corollary, which is not new and can be found for example in \cite{ditor}.    
\begin{corollary}
{\sl
The finite abelian groups of odd order that occur as groups of units of a ring $A$ are the finite products
of groups of type $\F_{2^\lambda}^*$. In particular, if $|A^*|=p$ is prime, then $p$ is a Mersenne prime. 
}
  \end{corollary}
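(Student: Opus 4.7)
The plan is to reduce to the finite characteristic setting (which the preceding corollary handles) by observing that odd order of $A^{*}$ forces the characteristic of $A$ to be $2$. Indeed, $-1\in A^{*}$ has order dividing $2$, so if $|A^{*}|$ is odd then $-1=1$, i.e.\ $2=0$ in $A$. Consequently $A$ has characteristic $2$ and Corollary \ref{carfinita} applies.

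Next, I apply Corollary \ref{carfinita} directly. It gives
$$A^{*}\cong A_{1}^{*}\times\cdots\times A_{s}^{*},$$
where each $A_{i}$ is a finite local ring. Since the only prime dividing $\mathrm{char}(A)=2$ is $2$, every $A_{i}$ has characteristic a power of $2$, and by Theorem \ref{struttura} we get
$$A_{i}^{*}\cong \F_{2^{\lambda_{i}}}^{*}\times H_{i},\qquad |H_{i}|=2^{\lambda_{i}k_{i}}.$$
Now $|A^{*}|$ odd forces each $|H_{i}|$ to be odd, and therefore $k_{i}=0$, i.e.\ $H_{i}$ is trivial. Thus every factor is of the form $\F_{2^{\lambda_{i}}}^{*}$. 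Conversely, any product of groups $\F_{2^{\lambda}}^{*}$ is realized, for instance by taking the direct product of the corresponding finite fields, so this class is exactly the class of realizable odd-order groups.

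For the Mersenne prime assertion, suppose $|A^{*}|=p$ is an (odd) prime. By what was just proved, $A^{*}$ is a product of groups $\F_{2^{\lambda_{i}}}^{*}$ of orders $2^{\lambda_{i}}-1\ge 1$. Since $p$ is prime, only one factor can be non-trivial, so $A^{*}\cong \F_{2^{\lambda}}^{*}$ for some $\lambda\ge 2$ and $p=2^{\lambda}-1$; hence $p$ is a Mersenne prime.

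There is no real obstacle here: once the observation $-1=1$ gives characteristic $2$, the whole statement becomes a direct specialization of Theorem \ref{struttura} and Corollary \ref{carfinita}. The only thing to double-check is that the reduction argument indeed works without any hypothesis on $A$ beyond the finiteness of $A^{*}$, which is immediate from the identity $-1\cdot(-1)=1$ in any unital ring.
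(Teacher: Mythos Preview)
Your proof is correct and follows essentially the same approach as the paper: the key observation that $|A^*|$ odd forces $-1=1$ and hence $\mathrm{char}(A)=2$ (the paper states only $\mathrm{char}(A)>0$, but the conclusion is the same once the structure results are applied), after which Theorem~\ref{struttura} and Corollary~\ref{carfinita} finish the job. Your version is simply more explicit, and you also spell out the easy converse direction, which the paper leaves implicit.
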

\begin{proof}
It is enough to note that if $|A^*|$ is odd, then necessarily ${\rm char}(A)>0$ and to apply Corollary \ref{numericarfinita}.
\end{proof}

\section{Families of realizable and non-realizable groups}

Theorem \ref{struttura} does not  solve completely the question of classifying the abelian $p$-groups $H$ which occur as $1+\mathfrak{m}$ in a local artinian ring. 
However,  in view of the constraints on the filtration of the group $H$,
 our results show that the group $H$ can not be any $p$-group if $\lambda>1$,  nor any group of order a power of $p^\lambda$; in fact, we have the following
 
 \begin{prop}
 \label{nonreal}
{\sl
Let $p$ be a prime, let $k$ and $\lambda$ be positive integers and let $H$ be an abelian $p$-group of cardinalirty $p^{\lambda k}$.  If there exists a local artinian ring $A$ such that  $A^*\cong \F_{p^\lambda}^*\times H$,  then the exponent of $H$ is at most $p^k$. 
}\end{prop}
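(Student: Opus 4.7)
The plan is to bound the exponent of $H = 1 + \mathfrak{m}$ by exploiting two facts coming from Theorem \ref{struttura}: (i) the $\mathfrak{m}$-adic filtration of $A$ terminates at some $\mathfrak{m}^r = 0$, and (ii) each successive quotient $\mathfrak{m}^i/\mathfrak{m}^{i+1}$ is a nonzero $\F_{p^\lambda}$-vector space for $i = 1, \dots, r-1$, so its dimension $k_i$ is at least $1$ and $\sum_{i=1}^{r-1} k_i = k$. In particular $r - 1 \le k$.

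The key computational step is to show that the $p$-th power map carries $1 + \mathfrak{m}^i$ into $1 + \mathfrak{m}^{i+1}$ for every $i \ge 1$. For $x \in \mathfrak{m}^i$, binomial expansion gives
\[
(1+x)^p \;=\; 1 + px + \sum_{j=2}^{p-1}\binom{p}{j}x^j + x^p.
\]
Since $\mathrm{char}(A)$ is a power of $p$ and $A/\mathfrak{m} \cong \F_{p^\lambda}$ has characteristic $p$, we have $p \in \mathfrak{m}$; therefore $px \in \mathfrak{m}^{i+1}$, each middle term lies in $p\,\mathfrak{m}^{ij} \subseteq \mathfrak{m}^{i+1}$, and $x^p \in \mathfrak{m}^{ip} \subseteq \mathfrak{m}^{i+1}$ (using $ip \ge i+1$ for $p \ge 2$, $i \ge 1$). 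Hence $(1+x)^p \in 1+\mathfrak{m}^{i+1}$.

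Iterating this $p$-th power shift along the filtration, any element of $H$ lands in $1 + \mathfrak{m}^{r} = \{1\}$ after at most $r - 1$ applications; that is, $(1+x)^{p^{r-1}} = 1$ for every $x \in \mathfrak{m}$. Combined with the inequality $r - 1 \le k$ from the first paragraph, this yields $(1+x)^{p^k} = 1$ for all $x \in \mathfrak{m}$, so the exponent of $H$ divides $p^k$, as claimed.

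There is no real obstacle here; the only point that requires a moment of care is handling the prime $p = 2$ uniformly in the binomial estimate (the inequality $ip \ge i+1$ still holds, and both the cross term $2x$ and the square $x^2$ land in $\mathfrak{m}^{i+1}$), so the same argument covers all primes without a separate case.
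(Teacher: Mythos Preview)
Your proof is correct and follows essentially the same route as the paper's. The paper's argument simply cites Theorem~\ref{struttura} to say that $H=1+\mathfrak{m}$ admits a filtration by the $1+\mathfrak{m}^i$ whose successive quotients are (nonzero) $\F_{p^\lambda}$-vector spaces, hence of exponent $p$ and of order at least $p^\lambda$; from $|H|=p^{\lambda k}$ the filtration has length at most $k$, so the exponent of $H$ divides $p^k$. You reprove the ``exponent $p$'' step directly with the binomial expansion rather than quoting the vector-space structure, but the logical skeleton---bound the filtration length by $k$, then push elements down one step per $p$-th power---is identical.
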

 \begin{proof}
 By Theorem \ref{struttura}, a necessary condition in order that $H$  can appear in \eqref{formula} is that 
$H$ has a filtration whose quotients are $\F_{p^\lambda}$-vector spaces, so since $|H|=p^{\lambda k}$,
 the length of its filtration  can be at most $k$, and every  element of $H$ must have order at most $p^k$.
 \end{proof}
 \begin{example}
 \label{exnuovo}
{\rm


 The group $H\cong\Z/p^3\Z\times(\Z/p\Z)^{2\lambda-3}$ does not occur as $1+\mathfrak{m}$ if $A/\mathfrak{m}=\F_{p^\lambda}$, since $|H|=p^{2\lambda}$ and $H$ contains elements of order $p^3$.
}
\end{example} 

{From the previous proposition it is easy to obtain a complete
  classification of finite cyclic groups of units which arise in the
  case of finite characteristic rings.  This result is not new (as we have already noted, it was firstly proved by Gilmer
  \cite{Gilmer63}), however, we include it since, at least
  for the case $p>2$, it is an immediate consequence of  Proposition \ref{nonreal}. 
  Actually, the proof
	  for the case $p=2$ requires some more effort,  which however well highlights the importance of the condition on the filtration in Theorem
  \ref{struttura}.

\begin{corollary}
\label{ciclici}
{\sl
A finite cyclic group is the group of units of a local artinian ring of characteristic a power of a prime $p$ 
if and only if its order is:
\begin{enumerate}[label={\alph*)}]
\item $p^\lambda-1$ for $\lambda\ge1$;
\item $(p-1)p^k$ for $k\ge1$ and $p>2$;
\item $2$; 
\item  4.
 \end{enumerate}
 A finite cyclic group is the group of units of a finite characteristic  ring  
if and only if its order is the product of a set of pairwise coprime integers of the previous list.
}
\end{corollary}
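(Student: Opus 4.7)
The plan is to combine Theorem \ref{struttura} and Proposition \ref{nonreal} to get strong structural constraints on $A^*$, with most of the real work concentrated in the prime $p=2$. First I would write $A^* \cong \F_{p^\lambda}^* \times H$ with $|H| = p^{\lambda k}$ and, crucially, $H$ of exponent at most $p^k$. Since $\gcd(p^\lambda - 1,\,p) = 1$, the group $A^*$ is cyclic if and only if both factors are cyclic. The first factor $\F_{p^\lambda}^*$ always is; the group $H$ is cyclic of order $p^{\lambda k}$ with exponent bounded by $p^k$ only if $\lambda k \le k$, forcing $k = 0$ (yielding case (a)) or $\lambda = 1$. For $\lambda = 1$ and $p > 2$, the resulting group is cyclic of order $(p-1)p^k$ and is realized by $A = \Z/p^{k+1}\Z$, giving case (b).

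The remaining case $p = 2$, $\lambda = 1$ is where the genuine work lies. Here $A^* = 1 + \mathfrak{m}$ is a cyclic $2$-group of order $2^k$, and cyclicity forces each $\F_2$-quotient $\mathfrak{m}^i/\mathfrak{m}^{i+1}$ to have dimension at most $1$; by Nakayama the dimensions are exactly $1$ up to the nilpotency index $r$, so $\mathfrak{m} = (a)$ is principal, $\mathfrak{m}^i = (a^i)$, and $k = r-1$. Next I would eliminate $\mathrm{char}(A) = 2^m$ for $m \ge 3$ by noting that $(\Z/2^m\Z)^* \cong \Z/2\Z \times \Z/2^{m-2}\Z$ embeds in $A^*$ and is non-cyclic, leaving only the cases $\mathrm{char}(A) \in \{2, 4\}$.

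The key observation in these two cases is that the squaring map is Frobenius-like. In characteristic $2$ this is immediate. In characteristic $4$, the unique element of order $2$ in cyclic $A^*$ is simultaneously $1 + 2$ and the non-identity element of the minimal piece $1 + \mathfrak{m}^{r-1} = \{1,\, 1 + a^{r-1}\}$ of the filtration, so $2 = a^{r-1}$, which forces $2a = a^r = 0$ and hence $(1+v)^2 = 1 + v^2$ for every $v \in \mathfrak{m}$. In either case $(1+a)^{2^j} = 1 + a^{2^j}$; after replacing $a$ by an appropriate unit multiple so that $1+a$ actually generates the cyclic group $A^*$, the order of $1+a$ equals $2^{\lceil \log_2 r \rceil}$, and matching this to $2^{r-1}$ forces $r \le 3$, i.e.\ $k \le 2$. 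The remaining cases $k=1$ and $k=2$ are realized by $\Z/4\Z$ (case (c)) and $\F_2[t]/(t^3)$ (case (d)) respectively, where a direct computation shows $A^*$ is cyclic of order $2$ and $4$.

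For a general finite characteristic ring, the decomposition $A \cong A_1 \times \cdots \times A_s$ into local artinian factors of prime power characteristic gives $A^* \cong \prod A_i^*$, which is cyclic precisely when each $A_i^*$ is cyclic of an order from the list above and these orders are pairwise coprime, yielding the final statement. I expect the main obstacle to be the characteristic $4$ sub-case of $p=2$: unlike in odd characteristic, Proposition \ref{nonreal} alone does not rule out $k \ge 3$, and one genuinely needs the identification $2 = a^{r-1}$ in order to reduce to a Frobenius-type argument.
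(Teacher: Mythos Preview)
Your proposal is correct and follows essentially the same route as the paper: reduce to $\lambda=1$ via the exponent bound of Proposition~\ref{nonreal}, rule out characteristic $\ge 8$ via the non-cyclicity of $(\Z/8\Z)^*$, establish $2\mathfrak{m}=0$ so that squaring is Frobenius, and then compare the order $2^{\lceil\log_2 r\rceil}$ of a generator $1+a$ to $|A^*|=2^{r-1}$ to force $r\le 3$. The only tactical difference is in how $2\mathfrak{m}=0$ is obtained in characteristic~$4$: the paper argues directly that if $2x\ne 0$ for some $x\in\mathfrak{m}$ then $\pm1\pm x$ are four distinct elements with the same square (impossible in a cyclic group), whereas you identify the unique involution simultaneously as $1+2$ and as $1+a^{r-1}$ to get $2=a^{r-1}$ and hence $2a=a^r=0$. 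Both arguments are short; yours has the mild advantage of making the principal structure $\mathfrak{m}=(a)$ explicit.
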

\begin{proof}
By Theorem \ref{struttura}, using the same notation, we have  $A^*\cong\F_{p^\lambda}^*\times H$ and in particular $H$ must be cyclic: Proposition \ref{nonreal} implies $|H|=1$ or $\lambda=1$. It follows that $|A^*|$ can only be either $p^\lambda-1$ with $\lambda\ge1$ or $(p-1)p^k$ for $k\ge1$. 
For $p>2$ all these groups occur, for example with 
$A=\F_{p^\lambda}$ and $A=\Z/p^{k+1}\Z$.

For $p=2$ we obtain all groups of order $2^\lambda-1$ by choosing $A=\F_{2^\lambda}$. We obtain the groups of order 2 and 4 by choosing $A=\Z/4\Z$ or $(\Z/2\Z)[x]/(x^3)$. 

On the other hand, when the characteristic of $A$ is a power of $2$, no other case is possible. In fact, since $(\Z/8\Z)^*$ is not cyclic, necessarily the characteristic of $A$ can only be 2 or 4. We also note that in both cases $2x=0$ for all $x\in \mathfrak{m}$ (if not, $\pm1\pm x$ would be 4 distinct elements with the same square, and this is not possible in a cyclic group). 
Assume that $|A^*|=2^k$; then $\lambda=1$ and $H= 1+ \mathfrak{m}$ is cyclic of order $2^k$. 
Let $1+\mu$ be a generator of $1+\mathfrak{m}$. The only filtration of  $1+\mathfrak{m}$ which satisfies the condition of Theorem \ref{struttura} is 
$$\{(1+\mathfrak{m})^{2^i}\}_{1\le i\le k}.$$
It follows that the length of the filtration in  \eqref{filtration} is $r-1=k$ and $\mu^r=0$.
Now, since $2\mu=0$,  and $1+\mu$ has order $2^k$, we have $(1+\mu)^{2^{k-1}}=1+\mu^{2^{k-1}}\ne1$. Therefore, $2^{k-1}<r=k+1$, so $k=1$ or $k=2$.

Finally, the statement about a general ring $A$ of finite characteristic follows recalling that its group of units $A^*$ is the product of groups of units $A_i^*$ ($i=1,\dots,r$) of local artinian rings, so it is cyclic if and only if all the $A_i^*$'s are cyclic and their orders are pairwise coprime.
\end{proof}

On the positive side, we are able to exhibit a large class of new groups that are realizable,   
in the sense that they can not be in general obtained as a product of realizable cyclic groups (see Remark \ref{new}).
In Proposition \ref{p>2} we show that for each $p>2$, $\lambda\ge1$ and for each abelian $p$-group $P$ one can construct a ring $A$ for which $H=P^\lambda$.  
In particular, for $\lambda=1$ all abelian $p$-groups are possible.

The case $p=2$ is exceptional and needs to be treated separately:  a class of groups (most of them new) which can be realized in this case is given in  Proposition \ref{p=2}. 
 
 In the next section we will exhibit examples of realizable groups which are not covered by Propositions  \ref{p>2} and \ref{p=2}.
 
\begin{prop}
\label{p>2}
{\sl
Let $p>2$ be a prime. For each $\lambda\ge1$ and for each finite abelian $p$-group $P$ there exists a local artinian ring $A$ such that 
$$A^*\cong\F_{p^\lambda}^*\times P^\lambda.$$
}
\end{prop}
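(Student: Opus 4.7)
The plan is to construct $A$ as a nilpotent thickening of a suitable truncated Witt vector ring. Writing $P \cong \Z/p^{a_1}\Z \times \cdots \times \Z/p^{a_n}\Z$ with $a_1 \geq \cdots \geq a_n \geq 1$ (the case $P = \{1\}$ being handled by $A = \F_{p^\lambda}$), I would first fix a monic $f(t) \in \Z[t]$ of degree $\lambda$ with irreducible reduction modulo $p$ and set the base ring
$$\cO = (\Z/p^{a_1+1}\Z)[t]/(f(t)).$$
As in the converse of Theorem \ref{struttura}, $\cO$ is local artinian with residue field $\F_{p^\lambda}$ and maximal ideal $p\cO$. Since $p > 2$, the $p$-adic logarithm (equivalently, the classical description of the units of the unramified extension of $\Z_p$ of degree $\lambda$) provides a group isomorphism $1 + p\cO \cong p\cO$, and $p\cO$ is a free $\Z/p^{a_1}\Z$-module of rank $\lambda$; hence $\cO^* \cong \F_{p^\lambda}^* \times (\Z/p^{a_1}\Z)^\lambda$.

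Next I would thicken $\cO$ by auxiliary nilpotents, forming
$$A = \cO[y_2, \dots, y_n]\big/\big(y_i y_j,\; p^{a_i} y_i : 2 \le i, j \le n\big).$$
Every generator of $\mathfrak{m} := (p, y_2, \dots, y_n)$ is nilpotent, so $A$ is local with maximal ideal $\mathfrak{m}$ and residue field $\cO/p\cO = \F_{p^\lambda}$. The projection $A \twoheadrightarrow A/(y_2, \dots, y_n) = \cO$ is split by the inclusion $\cO \hookrightarrow A$, so Proposition \ref{successioneesatta} applied to the nilpotent ideal $(y_2, \dots, y_n) \subseteq \mathfrak{m}$ yields a split short exact sequence
$$1 \longrightarrow 1 + (y_2, \dots, y_n) \longrightarrow 1 + \mathfrak{m} \longrightarrow 1 + p\cO \longrightarrow 1.$$
Since $(y_2, \dots, y_n)^2 = 0$, the bijection $x \mapsto 1 + x$ identifies $1 + (y_2, \dots, y_n)$ with the additive group $\bigoplus_{i=2}^n \cO y_i$, and each summand is $\cO y_i \cong \cO/p^{a_i}\cO \cong (\Z/p^{a_i}\Z)^\lambda$. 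Collecting the pieces, $1 + \mathfrak{m} \cong \prod_{i=1}^n (\Z/p^{a_i}\Z)^\lambda = P^\lambda$, and Theorem \ref{struttura} then gives $A^* \cong \F_{p^\lambda}^* \times P^\lambda$.

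The most delicate step I anticipate is the identification of $1 + p\cO$ with $(\Z/p^{a_1}\Z)^\lambda$: Theorem \ref{struttura} by itself only records the cardinality and the $\F_{p^\lambda}$-vector-space filtration of this group, so genuinely additional input (the $p$-adic logarithm being cleanest) is required, and it is precisely here that the hypothesis $p > 2$ enters in an essential way. The remaining checks — locality of $A$, vanishing of $(y_2, \dots, y_n)^2$, and splitting of the short exact sequence via $\cO \hookrightarrow A$ — are formal consequences of the defining relations.
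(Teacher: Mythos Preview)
Your construction is essentially identical to the paper's: the same base ring $\cO$ (their $R$), the same square-zero thickening by generators $y_i$ with relations $p^{a_i}y_i=0$ and $y_iy_j=0$, the same splitting of the unit group via $\cO\hookrightarrow A$, and the same identification of $1+(y_2,\dots,y_n)$ with the additive group $\bigoplus_i (\Z/p^{a_i}\Z)^\lambda$. The one substantive difference is in the step $1+p\cO\cong p\cO$: you appeal to the $p$-adic logarithm on the unramified extension of $\Z_p$ (descended to the truncation), whereas the paper proves the equivalent statement by an elementary binomial count (their Lemma~\ref{esponentepl}, showing $(1+\mu)^{p^l}=1\iff p^l\mu=0$ for $\mu\in p\cO$), which keeps the argument self-contained and makes the role of $p>2$ visible at the level of binomial coefficients rather than convergence of $\log/\exp$.
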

\begin{proof}
Consider the decomposition of the group $P$ as a product of cyclic groups  $P\cong \Z/p^{a_0}\Z\times \Z/p^{a_1}\Z\times\cdots\times \Z/p^{a_r}\Z$ with $a_0\ge a_1\ge\cdots\ge a_r$, 
and let $f(t)\in( \Z/p^{a_0+1}\Z)[t]$ be a monic polynomial of degree $\lambda$ such that its reduction modulo $p$, $\bar{f}(t)$, is irreducible. 
As in the proof of Theorem \ref{struttura}, define
$$R=\frac{(\Z/p^{a_0+1}\Z)[t]}{(f(t)) }$$
and 
$$A=\frac{R[x_1,\dots x_r]}{( p^{a_i}x_i,x_ix_j)_{1\le i,j\le r} }.$$
We already observed that $R$ is a local artinian ring with maximal ideal $\mathfrak{m}_R=(p)$. We note that  also $A$ is a local artinian ring with maximal ideal $\mathfrak{m}_A=(p,\bar x_1,\dots,\bar  x_r)$; in fact, 
$${A}/{\mathfrak{m}_A}\cong{R}/{\mathfrak{m}_R}\cong\F_{p^\lambda},$$
and  $\mathfrak{m}_A$ is the unique prime ideal of $A$ since all its elements are nilpotent. 

We will show that $A^*\cong\F_{p^\lambda}^*\times P^\lambda$.

Let $\mathfrak{I}=(\bar x_1,\dots,\bar x_r)\subset \mathfrak{m}_A$.
By Proposition \ref{successioneesatta} we have the exact sequence
\begin{equation*}
1\to 1+\mathfrak{I}\hookrightarrow A^*\stackrel{\phi}{\to}\left(A/\mathfrak{I}\right)^*\to 1.
\end{equation*}
Now,  it is clear that $(A/\mathfrak{I})^*\cong R^*\subset A^*$ so the exact sequence splits and $A^*\cong R^*\times (1+\mathfrak{I})$.

We note that $1+\mathfrak{I}\cong\mathfrak{I}$
since the map $x\mapsto1+x$ is a group isomorphism.
Now, $\mathfrak{I}\cong\left(\Z/p^{a_1}\Z\times\cdots\times \Z/p^{a_r}\Z\right)^\lambda$
since, as an abelian group, it is generated by the elements $\{\bar t^j\bar x_i\}_{0\le j<\lambda, 1\le i \le r}$ and $\bar t^j\bar x_i$ has additive order $p^a_i$, since $a_1\le a_0+1$. 
On the other hand, by Theorem \ref{struttura}, we know that $R^*\cong \F_{p^\lambda}\times (1+\mathfrak{m}_R)$, so 
we are left to prove that 
 $1+\mathfrak{m}_R\cong \left(\Z/p^{a_0}\Z\right)^\lambda$. 
 We note that $R$ is a free $(\Z/p^{a_0+1}\Z)$-module with basis $1,\bar t,\dots,\bar t^{\lambda-1}$, so $R\cong \left(\Z/p^{a_0+1}\Z\right)^\lambda$ as a group, hence $\mathfrak{m}_R=pR\cong \left(\Z/p^{a_0}\Z\right)^\lambda$. Thus, we are reduced to show that
 $1+\mathfrak{m}_R\cong\mathfrak{m}_R$. Since both are $p$-groups, it is enough to prove that the subgroup  $(1+\mathfrak{m}_R)_{p^l}$ of elements of exponent $p^l $ of $1+\mathfrak{m}_R$ has the same cardinality of the subgroup $(\mathfrak{m}_R)_{p^l}$ of elements of exponent $p^l $ of $\mathfrak{m}_R$. This is a direct consequence of the following lemma.
\end{proof}
\begin{lemma}
\label{esponentepl} 
{\sl 
Let $\mu\in\mathfrak{m}_R=pR$ and let $l\ge0$. Then,
$$(1+\mu)^{p^l}=1\iff p^l\mu=0.$$
}
\end{lemma}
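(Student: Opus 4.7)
The plan is to establish, by induction on $l \ge 0$, the stronger identity
\begin{equation*}
(1+\mu)^{p^l} = 1 + p^l\mu\,(1 + p\tau_l) \qquad \text{for some } \tau_l \in R.
\end{equation*}
Since $p\tau_l \in \mathfrak{m}_R$, the factor $1 + p\tau_l$ is automatically a unit in the local ring $R$, and consequently $(1+\mu)^{p^l} = 1$ is equivalent to $p^l\mu(1+p\tau_l) = 0$, which is equivalent to $p^l\mu = 0$. This yields both implications of the lemma at once.

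The base case $l = 0$ is immediate with $\tau_0 = 0$. For the inductive step I set $\xi := p^l\mu(1+p\tau_l)$ and expand binomially:
\begin{equation*}
(1+\xi)^p = 1 + p\xi + \sum_{k=2}^{p} \binom{p}{k}\xi^k.
\end{equation*}
The linear piece $p\xi = p^{l+1}\mu + p^{l+1}\mu\cdot p\tau_l$ already has the required form. For the higher-order terms, writing $\mu = p\nu$ one obtains $\xi^k = p^{k(l+1)-1}\,\mu\,\nu^{k-1}(1+p\tau_l)^k$, so a single factor $\mu$ pulls out of every summand. Combined with the fact that $\binom{p}{k}$ is divisible by $p$ for $1 \le k \le p-1$, this places $\binom{p}{k}\xi^k \in p^{k(l+1)}\mu R$, which is contained in $p^{l+2}\mu R$ as soon as $k \ge 2$.

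The one point at which the hypothesis $p > 2$ intervenes is the top-degree term $\xi^p$: here $\binom{p}{p} = 1$ contributes no extra power of $p$, so one has only $\xi^p \in p^{p(l+1)-1}\mu R$, and forcing this into $p^{l+2}\mu R$ amounts to $(p-1)l \ge 3 - p$. This inequality holds for every $l \ge 0$ precisely when $p \ge 3$, and fails when $p=2$, $l=0$; this matches the fact that the lemma itself is false for $p = 2$, since in $\Z/8\Z$ one has $(1+2)^2 = 1$ while $2\cdot 2 \ne 0$. Granting $p > 2$, packaging all remainder terms into a single $\tau_{l+1} \in R$ gives $(1+\mu)^{p^{l+1}} = 1 + p^{l+1}\mu(1+p\tau_{l+1})$, completing the induction. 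I expect the only genuine obstacle to be the careful bookkeeping of the binomial coefficients; casting the induction in the ``$1 + p^l\mu \cdot (\text{unit})$'' form above is precisely what avoids the need to track the entire expansion at once.
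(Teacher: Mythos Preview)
Your proof is correct. It differs from the paper's argument in its overall architecture: the paper performs a single binomial expansion of $(1+\mu)^{p^l}$ and controls every term $\binom{p^l}{k}\mu^k$ for $2\le k\le p^l$ via the classical valuation formula $v_p\!\binom{p^l}{k}=l-v_p(k)$, whereas you induct on $l$ and only ever expand a $p$-th power, so the sole arithmetic input you need is that $p\mid\binom{p}{k}$ for $1\le k\le p-1$. Packaging the induction hypothesis as ``$(1+\mu)^{p^l}=1+p^l\mu\cdot(\text{unit})$'' is a nice device: it delivers both implications simultaneously, while the paper treats the two directions separately after establishing that the tail $\sum_{k\ge 2}\binom{p^l}{k}\mu^k$ lies strictly deeper in the $p$-adic filtration than $p^l\mu$. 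Your approach is arguably more elementary; the paper's has the advantage of making the exact place where $p>2$ enters (the bound $v_p(k)\le k-2$) visible in a single inequality rather than isolated at the top term $k=p$.
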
 
\begin{proof}
By the binomial theorem,
\begin{equation}
\label{binomio}
(1+\mu)^{p^l}=1+p^l\mu+\sum_{k=2}^{p^l}\binom{p^l}{k}\mu^k.
\end{equation}

We claim that, if $\mu\in p^iR$ ($i\ge1$) and hence $p^l\mu\in p^{l+i}R$, then $\binom{p^l}{k}\mu^k\in p^{l+i+1}R$, for all $k\ge2$.
In fact, clearly $\mu^k\in p^{ik}R$; moreover, it is a classical result that if $p^c\mid\mspace{-2mu}\mid k$, then $p^{l-c}\mid\mspace{-2mu}\mid\binom{p^l}{k}$, so, since $p>2$ and $k\ge2$, we have that
$p^{l-k+2}$ divides $\binom{p^l}{k}$ and
$$p^{l+(i-1)k+2}\mid\binom{p^l}{k}\mu^k.$$

Now, $l+(i-1)k+2\ge l+i+1$ since this is equivalent to $(i-1)(k-1)\ge0$  (recall that $i\ge1$ and $k\ge2$), and the claim follows.

In conclusion, if $p^l\mu=0$ (namely, if $l+i\ge a_0+1$),  then $\binom{p^l}{k}\mu^k=0$ also for all $k\ge 2$, so $(1+\mu)^{p^l}=1$; on the other hand,
if $p^l\mu\ne0,$ then there exists $i$ with $l+i\le a_0$ such that $p^l\mu\in p^{l+i}R\setminus p^{l+i+1}R,$ so 
	$$ (1+\mu)^{p^l}\equiv1+p^l\mu\not\equiv1\pmod{p^{l+i+1}}.$$

\end{proof}

The case $\lambda=1$ of Proposition \ref{p>2} immediately gives the following
\begin{corollary}
\label{corp>2}
Let $p>2$ be a prime. 
For each finite abelian $p$-group $P$ there exists a finite local ring $A$ such that 
$$A^*\cong\F_{p}^*\times P.$$
\end{corollary}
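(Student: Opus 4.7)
The plan is essentially immediate: Corollary \ref{corp>2} is just the $\lambda=1$ specialization of Proposition \ref{p>2}. Since $P^\lambda = P^1 = P$ in this case, applying the proposition directly yields a local artinian ring $A$ with $A^* \cong \F_p^* \times P$, and every local artinian ring is finite whenever its residue field is finite, so $A$ is a finite local ring as required.

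Concretely, I would spell out the specialization of the construction in the proof of Proposition \ref{p>2}. Write $P \cong \Z/p^{a_0}\Z \times \Z/p^{a_1}\Z \times \cdots \times \Z/p^{a_r}\Z$ with $a_0 \ge a_1 \ge \cdots \ge a_r \ge 1$. Set $R = \Z/p^{a_0+1}\Z$ (this is the $\lambda=1$ analogue of the ring $(\Z/p^{a_0+1}\Z)[t]/(f(t))$, where the polynomial $f$ can be taken to be $t$, or equivalently one just omits the polynomial extension). Then define
\[
A = \frac{R[x_1,\dots,x_r]}{(p^{a_i} x_i,\, x_i x_j)_{1 \le i,j \le r}}.
\]
Proposition \ref{p>2} applied to this $A$ with $\lambda=1$ gives $A^* \cong \F_p^* \times P$.

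There is no real obstacle here: all the work has been done in Proposition \ref{p>2} and Lemma \ref{esponentepl}. The only thing worth double-checking is that no part of the proof of Proposition \ref{p>2} degenerates when $\lambda=1$; inspection shows it does not, since the residue field $R/\mathfrak{m}_R \cong \F_p$ and the structure computation $1+\mathfrak{m}_R \cong \mathfrak{m}_R \cong \Z/p^{a_0}\Z$ still go through verbatim.
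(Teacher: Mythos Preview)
Your proposal is correct and matches the paper's approach exactly: the corollary is stated immediately after Proposition~\ref{p>2} with the remark that it is the case $\lambda=1$, and no further proof is given. Your additional checks (that the construction specializes cleanly and that the resulting local artinian ring is finite) are accurate and harmless elaborations.
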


\begin{remark}
\label{new}
{\rm
Proposition \ref{p>2} produces infinite examples of "new" groups of units. In fact,
 if $p>2$ is not a Mersenne prime, and $P$ is a non cyclic abelian $p$-group, then
the group
$\F_{p^\lambda}^*\times P^\lambda$
is realizable but is not a product of cyclic realizable groups. 
 In fact, it is enough to observe that $2^a-1=p^b$ has no solution in view of the famous Catalan-Mih${\rm \check{a}}$ilescu Theorem. 

}
\end{remark}

\begin{remark}
{\rm
Proposition \ref{p>2} can not be generalised to $p=2$. In fact, putting $p=2$ and $\lambda=1$ we would obtain that all abelian 2-groups are groups of units but, as recalled in Corollary \ref{ciclici}, this is not true already for cyclic groups.

The peculiarity of the case $p=2$ lies in the fact that the group $(\Z/2^a\Z)^*$ is not cyclic in general. However, in the following proposition we exhibit a large class of groups of units of rings with characteristic a power of 2.
}
\end{remark}
\begin{prop}
\label{p=2}
{\sl 
Let $P$ be a finite abelian 2-group and  let $2^a$ be its exponent.  Then, for each integer $a_0\ge a-1$ and for each $\lambda \ge1$ there exists a  local artinian ring $A$ such that 
$$A^*\cong\F_{2^\lambda}^*\times\Z/2\Z\times \Z/2^{a_0-1}\Z\times(\Z/2^{a_0}\Z)^{\lambda-1}\times P^\lambda.$$
}
\end{prop}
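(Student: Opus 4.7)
The plan is to mirror the construction of Proposition \ref{p>2} and adapt it to the $p=2$ setting, where the structural computation for the base ring becomes substantially more delicate. Write $P \cong \prod_{i=1}^{r}\Z/2^{b_i}\Z$ with $b_1 \ge \cdots \ge b_r$ and $b_1 = a$. Pick $f(t) \in (\Z/2^{a_0+1}\Z)[t]$ monic of degree $\lambda$ whose reduction $\bar f$ is irreducible modulo $2$, set $R = (\Z/2^{a_0+1}\Z)[t]/(f(t))$, and define
$$A = \frac{R[x_1,\ldots,x_r]}{(2^{b_i}x_i,\; x_i x_j)_{1\le i,j\le r}};$$
these relations make sense because $b_i \le a \le a_0+1$.

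The first stage of the proof will reuse the argument of Proposition \ref{p>2} essentially unchanged: $A$ is local artinian with residue field $\F_{2^\lambda}$; setting $\mathfrak{I} = (\bar x_1, \ldots, \bar x_r)$, Proposition \ref{successioneesatta} yields a short exact sequence $1 \to 1 + \mathfrak{I} \to A^* \to R^* \to 1$ that splits via $R^* \hookrightarrow A^*$; the relations $\bar x_i \bar x_j = 0$ give $1 + \mathfrak{I} \cong \mathfrak{I}$ as abelian groups; and the $\Z$-basis $\{\bar t^j \bar x_i\}_{0 \le j < \lambda,\; 1\le i\le r}$ of $\mathfrak{I}$ (each $\bar t^j \bar x_i$ of additive order $2^{b_i}$) shows $\mathfrak{I} \cong P^\lambda$. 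Combined with Theorem \ref{struttura}, the problem reduces to establishing
$$1 + \mathfrak{m}_R \;\cong\; \Z/2\Z \times \Z/2^{a_0-1}\Z \times (\Z/2^{a_0}\Z)^{\lambda-1}. \qquad (\star)$$

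To prove $(\star)$ I will exhibit explicit generators and check their independence using the filtration $\{1 + \mathfrak{m}_R^k\}$, whose successive quotients are one-dimensional $\F_{2^\lambda}$-vector spaces. Fix a lift $\alpha \in R$ of an element of $\F_{2^\lambda}$ with trace $1$ over $\F_2$ (possible since the trace is surjective), and take as generators
$$u_{-1}=-1,\qquad u_0=1+4\alpha,\qquad u_i=1+2\bar t^{\,i}\ \ (i=1,\ldots,\lambda-1).$$
The key squaring identities $(1+2z)^2 \equiv 1 + 4\wp(z) \pmod{\mathfrak{m}_R^3}$, where $\wp(z)=z+z^2$, and $(1+2^ky)^2\equiv 1+2^{k+1}y\pmod{\mathfrak{m}_R^{k+2}}$ for $k\ge 2$, force the orders to be $2$, $2^{a_0-1}$, and $2^{a_0}$ respectively, whose product is $2^{a_0\lambda}=|1+\mathfrak{m}_R|$. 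Independence will then follow by induction on the filtration level: at level $1$, the images of $u_{-1},u_1,\ldots,u_{\lambda-1}$ in $(1+\mathfrak{m}_R)/(1+\mathfrak{m}_R^2)\cong\F_{2^\lambda}$ are $\bar 1,\bar t,\ldots,\bar t^{\lambda-1}$, which form an $\F_2$-basis; at level $2$, the images of $u_0$ and of the squares $u_i^2$ in $(1+\mathfrak{m}_R^2)/(1+\mathfrak{m}_R^3)\cong\F_{2^\lambda}$ are $\bar\alpha$ together with $\wp(\bar t^{\,i})$, which again form an $\F_2$-basis because $\bar\alpha$ has nonzero trace while by Artin--Schreier theory $\wp(\F_{2^\lambda})=\ker\mathrm{Tr}_{\F_{2^\lambda}/\F_2}$ has codimension $1$. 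Higher levels reduce to the same $\F_2$-basis by the identity-squaring on quotients.

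The main obstacle is exactly this level-$2$ independence check: the nonlinear identity $(1+2z)^2\equiv 1+4\wp(z)$ means naive candidate generators may coincide with squares of others---for instance, $(1+2\bar t)^2=1+4$ when $\lambda=2$ and $\wp(\bar t)=1$---producing hidden dependencies that would collapse the decomposition. The trace-$1$ condition on $\alpha$ is precisely what guarantees $u_0$ supplies the one-dimensional direction outside $\wp(\F_{2^\lambda})=\ker\mathrm{Tr}$ and completes the basis. This Artin--Schreier degeneracy has no counterpart for $p>2$, where the $p$-th power map on filtration quotients is simply multiplication by $p$, which is why in Proposition \ref{p>2} the analogue of Lemma \ref{esponentepl} suffices without any trace condition.
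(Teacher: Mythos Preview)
Your construction of $A$ and the reduction to $(\star)$ match the paper exactly; the divergence is entirely in how you compute the structure of $1+2R$. The paper proceeds by counting: it determines the number of cyclic factors by computing $|\ker(u\mapsto u^2)|=2^{\lambda+1}$ via the observation that in the local ring $R$ exactly one of $\overline{p(t)}$, $1+\overline{p(t)}$ is a unit; it then proves a variant of Lemma~\ref{esponentepl} for $\mu\in 4R$ to conclude $1+4R\cong 4R\cong(\Z/2^{a_0-1}\Z)^\lambda$; from $|(1+4R)/(1+2R)^2|=2$ it reads off $(1+2R)^2$ and hence $1+2R$. Your route is instead constructive: you write down explicit generators and prove independence level by level in the filtration, with the Artin--Schreier identity $\wp(\F_{2^\lambda})=\ker\mathrm{Tr}$ supplying exactly the missing direction at level $2$. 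Both arguments are correct. Your approach has the advantage of producing explicit generators and making transparent \emph{why} an extra $\Z/2\Z$ factor appears (the trace obstruction), at the cost of needing a trace-$1$ lift and the Artin--Schreier fact; the paper's approach avoids any choice of $\alpha$ and is closer in spirit to Lemma~\ref{esponentepl}, but is less explicit. One small point worth making explicit in your write-up: the linear independence of $\wp(\bar t),\dots,\wp(\bar t^{\lambda-1})$ follows because $\bar t,\dots,\bar t^{\lambda-1}$ span a complement to $\ker\wp=\F_2$, which in turn holds because $1,\bar t,\dots,\bar t^{\lambda-1}$ is a basis.
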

\begin{proof}
Consider the decomposition of $P$ as a product of cyclic groups,  $P\cong  \Z/2^{a_1}\Z\times\cdots\times \Z/2^{a_r}\Z$ with $a_1\ge\cdots\ge a_r$, so $a=a_1$.
Let $f(t)\in \Z[t]$ be a monic polynomial of degree $\lambda$ such that its reduction modulo $2$, $\bar{f}(t)$, is irreducible. 
As in Proposition \ref{p>2}, define
$$R=\frac{(\Z/2^{a_0+1}\Z)[t]}{(f(t)) }$$
and 
$$A=\frac{R[x_1,\dots x_r]}{( 2^{a_i}x_i,x_ix_j)_{1\le i,j\le r} };$$
$R$ and $A$ are local artinian rings with maximal ideals $\mathfrak{m}_R=2R$ and $\mathfrak{m}_A=(2, \bar x_1, \dots,\bar x_r)$, respectively.
We will show that $A$ has the required group of units.
Again, letting $\mathfrak{I}=(\bar x_1,\dots,\bar x_r)\subset \mathfrak{m}_A$, we can argue as in the proof of Proposition \ref{p>2} and get 
 $A^*\cong R^*\times (1+\mathfrak{I})$ and $1+\mathfrak{I}\cong\mathfrak{I}\cong P^\lambda.$

Moreover, $R^*\cong \F_{2^\lambda}^*\times (1+2R)$. Then we have to study the group $1+2R$: as in Proposition \ref{p>2} we have $R\cong (\Z/2^{a_0+1}\Z)^\lambda$ as a group, so $2R\cong (\Z/2^{a_0}\Z)^\lambda$, but, contrary to the case $p>2$, $1+2R$ is not always isomorphic to $2R$. 

When $a_0=1$, it is easy to see that the group $1+2R$ is elementary abelian of cardinality $2^\lambda.$

Assume now $a_0\ge2$.
To determine the structure of $1+2R$ it is enough to know the structure of its subgroup $(1+2R)^2$ and of the quotient $(1+2R)/(1+2R)^2$, since this quotient has the same number of generators as the group $1+2R$.

We begin by studying the quotient. Clearly, it is an elementary abelian 2-group, so it is enough to determine its order; now, $|(1+2R)/(1+2R)^2|=|\ker\varphi|$ where $\varphi: (1+2R)\to (1+2R)^2$ is defined by $u\mapsto u^2.$ 
Let $1+2\overline{p(t)}\in 1+2R$, where $\overline{p(t)}\in R$ and $\deg p(t)<\lambda$. 
We have
$$
(1+2\overline{p(t)})^2=1\iff 4\overline{p(t)}(1+\overline{p(t)})=0\iff \overline{p(t)}(1+\overline{p(t)})\in2^{a_0-1}R.
$$
Since $R$ is a local ring,  one between $\overline{p(t)}$ and $1+\overline{p(t)}$ is invertible, so  either $\overline{p(t)}\in2^{a_0-1}R$ or $1+\overline{p(t)}\in2^{a_0-1}R$ and actually exactly  one of these two relations holds. 
Hence, $\ker\varphi=2^{a_0}R\cup 1+2^{a_0}R$ and 
$$|(1+2R)/(1+2R)^2|=|\ker\varphi|=2|2^{a_0}R|=2^{\lambda+1}.$$

Next, to study $(1+2R)^2$ we find it convenient to study first the larger group $1+4R$. We observe that $|(1+2R)/(1+4R)|=2^\lambda$ (recall that $a_0\ge2$), hence $|(1+4R)/(1+2R)^2|=2$.
As in Proposition \ref{p>2},  we prove that $1+4R$ is isomorphic to $4R\cong(\Z/2^{a_0-1}\Z)^\lambda$ by showing that the two groups have the same number of elements of exponent $2^l$ for all $l$. This is a consequence of the following slight variation of Lemma \ref{esponentepl}.
\begin{lemma}
\label{binomio2}
{\sl
Let $\mu\in4R$ and let $l\ge0$. Then,
$$(1+\mu)^{2^l}=1\iff 2^l\mu=0.$$
}
\end{lemma}
\begin{proof}
Let $\mu\in2^iR$ with $i\ge2$. Consider the expansion of $(1+\mu)^{2^l}$.  Clearly $2^{l+i}\Vert 2^l\mu$;  as in the proof of Lemma \ref{esponentepl} it is enough to show that $2^{l+i+1}$ divides $\binom{2^l}{k}\mu^k$ for $k\ge2$. 
In this case, $2^{l-k+1}$ divides  $\binom{2^l}{k}$, so 
$$ 2^{l-k+1+ik}\mid\binom{2^l}{k}\mu^k$$
Now, $l-k+1+ik\ge l+i+1$ since this is equivalent to $i(k-1)\ge k$ and $i,k\ge2.$
\end{proof}
Putting everything together we get
$$(1+2R)^2\cong  \Z/2^{a_0-2}\Z\times(\Z/2^{a_0-1}\Z)^{\lambda-1}$$
and 
$$(1+2R)\cong  \Z/2\Z\times\Z/2^{a_0-1}\Z\times(\Z/2^{a_0}\Z)^{\lambda-1}.$$
\end{proof}

\section{Further examples}
\label{secex}

In this section we give examples showing that the groups $H$ obtained in Propositions \ref{p>2} and \ref{p=2}  do not exhaust all realizable groups.

\begin{example}
\label{esempio-p}
{\rm Let $p>2$, $f(t)$ be a monic polynomial of degree 2 which is irreducible modulo $p$ and let
$$A=\frac{(\Z/p^2\Z)[t,x]}{(f(t),px^2,x^{p-1}+p)}.$$
We claim that $A$ is a finite local artinian ring and, denoting by $\mathfrak{m}$ its maximal ideal, we have $A^*\cong\F_{p^2}\times (1+\mathfrak{m})$ and $1+\mathfrak{m}$ is not the square of a group, so this group of units does not belong to the set of  groups described in Proposition \ref{p>2}.
Defining
$$R=\frac{(\Z/p^2\Z)[t]}{(f(t))},\quad B=\frac{R[x]}{(x^{p-1}+p)}$$
we have
$$A=\frac{B}{(p\bar x^2)}.$$

Denoting by $\bar x$ the class of $x$ in $B$, we have 
$$B=\oplus_{i=0}^{p-2}R\bar x^i$$ as an $R$-module. 
It follows that, as an $R$-module, the ideal $(p\bar x^2)$ is equal to $\oplus_{i=2}^{p-2}Rp\bar x^i$, hence
	$$A=\frac{\oplus_{i=0}^{p-2}R\bar x^i}{\oplus_{i=2}^{p-2}Rp\bar x^i}
	\cong R^2\oplus\left(\frac{R}{pR}\right)^{p-3}$$
as an $R$-module, and $|A|=p^{2p+2}$. Clearly $A$ is a finite local artinian ring, since $B$ is such; its maximal ideal is  
 $\mathfrak{m}=(p,\bar{\bar x})=(\bar {\bar x})$, where $\bar{\bar x}$ denotes the class of ${\bar x}$ in $A$. 
This gives the $R$-module isomrphism  
 $$\mathfrak{m}=pR\oplus R\bar {\bar x}\oplus_{i=2}^{p-2} R\bar{\bar x}^i\cong R\oplus\left(\frac{R}{pR}\right)^{p-2},$$
so,  $|\mathfrak{m}|=p^{2p}$ and $A/\mathfrak{m}\cong\F_{p^2}$, {\it i.e.}, in the notation of Proposition \ref{p>2}, $\lambda=2$.

We show that 
$$1+\mathfrak{m} \cong \Z/p^2\Z\times(\Z/p\Z)^{2p-2}$$ 
and hence it is not the square of a group. 
In fact, it is enough to prove that in $ 1+\mathfrak{m}$ there are exactly $p^{2p-1}$ elements of exponent $p$.

Let $a\in 1+\mathfrak{m}$, then  $a$ can be written as $a= 1+ a_0  p+ a_1\bar {\bar x}+\dots+a_{p-2}\bar {\bar x}^{p-2}$ 
 where $a_1\in R$ is  uniquely determined, and $a_i\in R$ is uniquely determined  modulo $p$ if $i\ne1$.
Then $a^p=1+p( a_1- a_1^p)\bar {\bar x}=1$ if and only if the reduction of $a_1$ modulo $p$ belongs to $\F_p$. The elements $a$ verifying this condition are $p^{2p-1}$ since there are $p^2$ possibilities for $ a_i$ if $i\ne1$, and $p^3$ possibilities for $a_1$, and the claim follows.
}
\end{example}
\begin{example}
\label{esempio-p=2}
{\rm
For $p=2$  and $a_0\ge3$, consider the ring 
$$A=\frac{(\Z/2^{a_0+1}\Z)[t,x]}{(t^2+t+1,4x,x^2+2x)}.$$
We show that $A$ is a finite local artinian ring, but $A^*$ does not belong to the set of groups described in Proposition \ref{p=2}. Theorem \ref{struttura}
 guarantees that $A^*=(A/\mathfrak{m})^*\times (1+\mathfrak{m})$, where $\mathfrak{m}$ is the maximal ideal of $A$.
 
 Similarly to above, we can consider the ring 
 $$R=(\Z/2^{a_0+1}\Z)[t]/(t^2+t+1);$$
 we have $\mathfrak{m}_R=\mathfrak{m}\cap R=2R$ and $A/\mathfrak{m}=R\mathfrak{m}_R=\F_4$, so in the previous notation $\lambda=2$.
As seen in the proof of Proposition \ref{p=2}, we have that 
$$1+\mathfrak{m}_R=(1+\mathfrak{m})\cap R^*\cong\Z/2\Z\times \Z/2^{a_0-1}\Z\times\Z/2^{a_0}\Z$$  is a subgroup of $1+\mathfrak{m}$. 

Denote by $\bar x$ the class of $x$ in $A$; arguing as in Example \ref{esempio-p}, we have 
$$\mathfrak{m}=(2,\bar x)=2R\oplus\bar xR\cong R/2^{a_0}R\oplus R/4R$$ as an $R$-module, and $|1+\mathfrak{m}|=|\mathfrak{m}|=2^{2a_0+4}$.

To determine the decomposition of  $1+\mathfrak{m}$ as a product of cyclic groups, we begin by counting its elements of exponent 2. Let $\mu\in \mathfrak{m}$, then $\mu=2\rho+\sigma\bar x$, where $\rho,\sigma\in R$ and are uniquely determined modulo $2^{a_0}R$ and  modulo $4R$, respectively.
Taking into account that $4\bar x=0$, we have
\begin{multline}
(1+\mu)^2=1\iff 4(\rho+\rho^2)+2(\sigma+\sigma^2)\bar x=0\\ 
\iff \rho+\rho^2\in2^{a_0-1}R\ {\rm and}\ \sigma+\sigma^2\in 2R.
\end{multline}
Now, $\rho+\rho^2\in2^{a_0-1}R$ if and only if $\rho\in  2^{a_0-1}R\cup (-1+2^{a_0-1}R)$ and each of the two classes has 4 representatives in $R/2^{a_0}R$, so there are 8 possibilities for $\rho.$ The condition $\sigma+\sigma^2\in 2R$ can be rewritten as $\pi(\sigma)+\pi(\sigma)^2=0$, where $\pi\colon R/4R\to R/2R\cong\F_4$ is the canonical projection. 
This means that $\pi(\sigma)\in\F_2$, so there are $8$ possibilities for $\sigma$. In conclusion, there are $2^6$ elements of exponent 2 in  $1+\mathfrak{m}$. This yields that  the decomposition of $1+\mathfrak{m}$ has 6 cyclic factors. 
Taking into account that $1+\mathfrak{m}$ has order $2^{2a_0+4}$, 6 cyclic factors and a subgroup isomorphic to 
$\Z/2\Z\times \Z/2^{a_0-1}\Z\times\Z/2^{a_0}\Z$, it is immediate to see that
$$
1+\mathfrak{m} \cong  (\Z/2\Z)^3\times\Z/4\Z\times \Z/2^{a_0-1}\Z\times\Z/2^{a_0}\Z
$$
so, for $a_0\ge3$, the group $A^*$ does not belong to the set of groups described in Proposition \ref{p=2}. 
}
\end{example}

\bibliographystyle{amsalpha}
\bibliography{biblio}

\def\Dbar{\leavevmode\lower.6ex\hbox to 0pt{\hskip-.23ex \accent"16\hss}D}
  \def\cfac#1{\ifmmode\setbox7\hbox{$\accent"5E#1$}\else
  \setbox7\hbox{\accent"5E#1}\penalty 10000\relax\fi\raise 1\ht7
  \hbox{\lower1.15ex\hbox to 1\wd7{\hss\accent"13\hss}}\penalty 10000
  \hskip-1\wd7\penalty 10000\box7}
  \def\cftil#1{\ifmmode\setbox7\hbox{$\accent"5E#1$}\else
  \setbox7\hbox{\accent"5E#1}\penalty 10000\relax\fi\raise 1\ht7
  \hbox{\lower1.15ex\hbox to 1\wd7{\hss\accent"7E\hss}}\penalty 10000
  \hskip-1\wd7\penalty 10000\box7}
\providecommand{\bysame}{\leavevmode\hbox to3em{\hrulefill}\thinspace}
\providecommand{\MR}{\relax\ifhmode\unskip\space\fi MR }
\providecommand{\MRhref}[2]{%
  \href{http://www.ams.org/mathscinet-getitem?mr=#1}{#2}
}
\providecommand{\href}[2]{#2}
\begin{thebibliography}{DCD17}

\bibitem[CL15]{CheboluLockridge15}
Sunil~K. Chebolu and Keir Lockridge, \emph{Fuchs' problem for indecomposable
  abelian groups}, J. Algebra \textbf{438} (2015), 325--336.

\bibitem[Cor63]{Corner63}
A.~L.~S. Corner, \emph{Every countable reduced torsion-free ring is an
  endomorphism ring}, Proc. Lond. Math. Soc. \textbf{13} (1963), no.~3,
  687--710.

\bibitem[DCD17]{DDchar0}
Ilaria Del~Corso and Roberto Dvornicich, \emph{On {F}uchs' {P}roblem about the
  group of units of a ring}, in preparation (2017).

\bibitem[Dit71]{ditor}
S.~Z. Ditor, \emph{On the group of units of a ring}, The American Mathematical
  Monthly \textbf{78} (1971), no.~5, 522--523.

\bibitem[Dol02]{Dolzan02}
D.~Dol$\check{\rm z}$an, \emph{Groups of units in a finite ring}, J. Pure Appl.
  Algebra \textbf{170} (2002), no.~2-3, 174--183.

\bibitem[EF67]{EldridgeFischer67}
K.~E. Eldridge and I.~Fisher, \emph{Dcc rings with a cyclic group of units},
  Duke Math. J. \textbf{34} (1967), 243--248.

\bibitem[Fuc60]{Fuchs60}
L.~Fuchs, \emph{Abelian groups}, 3rd ed., Pergamon, Oxford, 1960.

\bibitem[Gil63]{Gilmer63}
R.~W. Gilmer, \emph{Finite rings with a cyclic group of units}, Amer. J. Math.
  \textbf{85} (1963), 447--452.

\bibitem[HH65]{HalletHirsch65}
J.~T. Hallet and K.~A. Hirsch, \emph{Torsion-free groups having finite
  automorphism groups}, J. of Algebra \textbf{2} (1965), 287--298.

\bibitem[HZ66]{HirschZassenhaus66}
K.~A. Hirsch and H.~Zassenhaus, \emph{Finite automorphism groups of torsion
  free groups}, J. London Math. Soc. \textbf{41} (1966), 545--549.

\bibitem[PS70]{PearsonSchneider70}
K.~R. Pearson and J.~E. Schneider, \emph{Rings with a cyclic group of units},
  J. of Algebra \textbf{16} (1970), 243--251.

\bibitem[Rob95]{Robinson95}
D.~J.~S. Robinson, \emph{A course in the theory of groups}, 2nd ed., Graduate
  Texts in Mathematics, vol.~80, Springer-Verlag, New York, 1995.

\end{thebibliography}

\end{document}